\numberwithin{equation}{section}
\newcommand{\dist}{\mathsf{d}}
\newcommand{\Lint}{\mathit{L}}
\newcommand{\Sp}{\mathrm{X}}
\newcommand{\mea}{\mathfrak{m}}
\newcommand{\mms}{(\Sp, \dist, \mea)}
\newcommand{\real}{\mathbb{R}}
\newcommand{\W}{\mathit{W}^{1,2}}
\newcommand{\D}{\mathrm{D}}
\newcommand{\loc}{\mathsf{loc}}
\newcommand{\C}{\mathit{C}}
\newcommand{\mae}{\mea\text{-a.e.}}
\def\Xint#1{\mathchoice 
  {\XXint\displaystyle\textstyle{#1}}%
  {\XXint\textstyle\scriptstyle{#1}}%
  {\XXint\scriptstyle\scriptscriptstyle{#1}}%
  {\XXint\scriptscriptstyle\scriptscriptstyle{#1}}%
  \!\int} 
\def\XXint#1#2#3{{\setbox0=\hbox{$#1{#2#3}{\int}$} 
  \vcenter{\hbox{$#2#3$}}\kern-.5\wd0}} 
\def\-int{\Xint -}
\numberwithin{equation}{subsection}
\newcommand{\R}{\mathbb{R}}
\newcommand{\mm}{{\mbox{\boldmath$m$}}}
\newcommand{\sfd}{{\sf d}}
\newcommand{\Kliminf}{K\kern-3pt-\kern-2pt\mathop{\rm lim\,inf}\limits}  
\newcommand{\supp}{\mathop{\rm supp}\nolimits}   
\newcommand{\Lip}{\mathop{\rm Lip}\nolimits}          
\renewcommand{\d}{{\mathrm d}}
\newcommand{\restr}[1]{\lower3pt\hbox{$|_{#1}$}} 
\newcommand{\la}{\left<}                  
\newcommand{\ra}{\right>}
\newcommand{\eps}{\varepsilon}  
\newcommand{\nchi}{{\raise.3ex\hbox{$\chi$}}}
\newcommand{\fr}{\penalty-20\null\hfill$\blacksquare$}                      
\renewcommand{\mm}{\mathfrak m}                                
\renewenvironment{proof}{\removelastskip\par\medskip   
\noindent{\em proof} \rm}{\penalty-20\null\hfill$\square$\par\medbreak}
\newtheorem{theorem}{Theorem}[section]
\newtheorem{lemma}[theorem]{Lemma}
\newtheorem{definition}[theorem]{Definition}
\newtheorem{remark}[theorem]{Remark}
\newcommand{\bd}{{\mathbf\Delta}}
\newcommand{\X}{{\rm X}}
\newcommand{\CD}{{\sf CD}}
\newcommand{\RCD}{{\sf RCD}}
\newcommand{\HS}{{\lower.3ex\hbox{\scriptsize{\sf HS}}}}
\title{A note about the strong maximum principle on  $\RCD$ spaces}
\begin{document}

\author{
   Nicola Gigli\
   \thanks{SISSA. email: \textsf{ngigli@sissa.it}} \and Chiara Rigoni\thanks{SISSA. crigoni@sissa.it}
   }
\maketitle
\begin{abstract} We give a quick and direct proof of the strong maximum principle on finite dimensional $\RCD$ spaces based on the Laplacian comparison of the squared distance.
\end{abstract}

\tableofcontents

\section{Introduction}
In the context of analysis in metric measure spaces it is by now well understood that a doubling condition and a Poincar\'e inequality are sufficient to derive the basics of elliptic regularity theory. In particular, one can obtain the Harnack inequality for harmonic functions which in turns implies the strong maximum principle. We refer to \cite{Bjorn-Bjorn11} for an overview on the topic and detailed bibliography.

$\RCD^*(K,N)$ spaces (\cite{AmbrosioGigliSavare11-2}, \cite{Gigli12}, see also \cite{AmbrosioGigliSavare12}, \cite{Erbar-Kuwada-Sturm13}, \cite{AmbrosioMondinoSavare13})  are, for finite $N$, doubling (\cite{Sturm06II}) and supporting a Poincar\'e inequality (\cite{Rajala12}) and thus in particular the above applies. Still, given that in fact such spaces are much more regular than general doubling\&Poincar\'e ones, one might wonder whether there is a simpler proof of the strong maximum principle.

Aim of this short note it show that this is actually the case: out of the several arguments available in the Euclidean space, the one  based on the estimates for the Laplacian of the squared distance carries over to such non-smooth context rather easily. 

Beside such Laplacian comparison, the other ingredient that we shall use is a result about a.e.\ unique projection on closed subsets of $\RCD$ spaces which to the best of our knowledge has not been observed before and, we believe, is of its own interest: see Lemma \ref{le:proj} and Remark \ref{re:proj}.
 %

We remark that the present result simplifies the proofs of those properties of $\RCD(K,N)$ spaces which depend on the strong maximum principle, like for instance the splitting theorem (\cite{Gigli13}, \cite{Gigli13over}).

\section{Result}
All metric measure spaces $\mms$ we will consider will be such that $(\Sp,\sfd)$ is complete and separable, $\mm$ is a Radon non-negative measure with $\supp(\mm)=\Sp$.

To keep the presentation short we assume the reader familiar with the definition of $\RCD^*$ spaces and with calculus on them. Here we only recall those definitions and facts that will be used in the course of the proofs. In particular, we shall take for granted the notion of $W^{1,2}(\Sp)$ space on the metric measure space $\mms$ and, for $f\in W^{1,2}(\Sp)$, of the minimal weak upper gradient $|\D f|$. Recall that the minimal weak upper gradient is a local object, i.e.:
\begin{equation}
\label{eq:locwug}
|\D f|=|\D g|\qquad\mm-a.e.\ \text{on }\{f=g\}\qquad\forall f,g\in W^{1,2}(\Sp).
\end{equation}
Then the notion of Sobolev space over an open set can be easily given: 
\begin{definition}[{\bf Sobolev space on an open subset of $\X$}]\label{def:WOmega}
Let $\mms$ be a metric measure space and let $\Omega \subset \X$ open. Then we define
\[\begin{split} \W_\loc(\Omega) := \{  f \in \Lint^2_{\loc}(\Omega) : \,\,&\text{for every} \,\,\, x \in \Omega \,\,\, \text{there exists} \,\, \, U \subset \Omega \,\,\text{neighbourhood of} \, \, \, x\\
 &\text{and there exists} \, \, \, f_U \in \W_\loc(\X) \,\,\, \text{such that} \,\,\, f\restr U = f_U \}.
\end{split}\]
For $f\in W^{1,2}_\loc(\Omega)$ the function $|\D f|\in L^2_\loc(\Omega)$ is defined as
\[
|\D f|:=|\D f_U|\quad\mm-\text{a.e.\ on }U,
\]
where $|\D f_U|$ is the minimal weak upper gradient of $f_U$ and the locality of this object ensures that $|\D f|$ is well defined.

Then  we set \[\W(\Omega) := \{ f \in \W_\loc(\Omega) : f, |{\D f}| \in \Lint^2(\Omega) \}.\]
\end{definition}
The definition of (sub/super)-harmonic functions can be given in terms of minimizers of the Dirichlet integral (see \cite{Bjorn-Bjorn11} for a throughout discussion on the topic):
\begin{definition}[{\bf Subharmonic/Superharmonic/Harmonic functions}]\label{def:harm}
Let $\mms$ be a metric measure space and $\Omega$ be an open subset in $\X$. We say that $f$ is subharmonic (resp. superharmonic) in $\Omega$ if $f \in \W(\Omega)$ and for any $g \in \W (\Omega)$, $g \le 0$ (resp. $g \ge 0$) with $\supp g \subset \Omega$, it holds
\begin{equation}\label{eq:subhvf}  \dfrac{1}{2} \int_{\Omega} |{\D f}|^2 \, \d \mea \le \dfrac{1}{2} \int_{\Omega} |{\D (f + g)}|^2 \, \d \mea .  \end{equation}
$f$ is harmonic in $\Omega$ if it is both subharmonic and superharmonic.
\end{definition}
On $\RCD(K,\infty)$ spaces, the weak maximum principle can be deduced directly from the definition of subharmonic function and the following property, proved in \cite{AmbrosioGigliSavare11-2}:
\begin{equation}
\label{eq:StL}
\begin{split}
&\text{Let  $\mms$ be $\RCD(K,\infty)$, $K\in\R$, and  $f\in W^{1,2}(\Sp)$ be such that $|\D f|\in L^\infty(\Sp)$.}\\
&\text{Then there exists $\tilde f=f$ $\mm$-a.e.\ such that $\Lip(\tilde f)\leq \|\D f|\|_{L^\infty}$.}
\end{split}
\end{equation}

We can now easily prove the following:
\begin{theorem}[{\bf Weak Maximum Principle}]\label{WMP}
Let $\mms$ be an $\RCD(K, \infty)$ space,  $\Omega \subset \Sp$ open and let $f \in \W(\Omega) \cap \C(\bar{\Omega})$ be subharmonic.
Then 
\begin{equation}
\label{eq:wmax}
\sup_{\Omega} f \le \sup_{\partial \Omega} f,
\end{equation}
to be intended as `$f$ is constant' in the case $ \Omega = \Sp$.
\end{theorem}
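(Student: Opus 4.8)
The plan is to test the variational definition of subharmonicity \eqref{eq:subhvf} against a single family of truncations of $f$ and then convert the resulting energy inequality into pointwise information by means of locality \eqref{eq:locwug} and the Lipschitz-representative property \eqref{eq:StL}. Set $M:=\sup_{\partial\Omega}f$ and fix a level $\ell>M$; it is enough to prove $f\le\ell$ $\mae$ on $\Omega$, because continuity of $f$ together with $\supp\mm=\Sp$ then upgrades this to $f\le\ell$ everywhere on $\Omega$, and letting $\ell\downarrow M$ yields \eqref{eq:wmax}. As competitor I would take $g:=-(f-\ell)^+$. Then $g\le0$ and, being a truncation of $f\in\W(\Omega)$, it satisfies $g\in\W(\Omega)$ with $|\D g|=|\D f|\,\mathbf 1_{\{f>\ell\}}$ by locality and the chain rule. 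The decisive point is the support condition: since $f\in\C(\bar\Omega)$ and $f\le M<\ell$ on $\partial\Omega$, the set $\{f\ge\ell\}\cap\bar\Omega$ is closed and disjoint from $\partial\Omega$, hence contained in $\Omega$; as $\supp g\subset\{f\ge\ell\}$, we get $\supp g\subset\Omega$, so that $g$ is an admissible competitor in Definition \ref{def:harm}.

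With this choice $f+g=\min(f,\ell)$. Locality \eqref{eq:locwug} gives $|\D f|=0$ $\mae$ on $\{f=\ell\}$ and $|\D\min(f,\ell)|=0$ $\mae$ on $\{f\ge\ell\}$, whence $|\D(f+g)|=|\D f|\,\mathbf 1_{\{f<\ell\}}$ $\mae$. Inserting this into \eqref{eq:subhvf} and subtracting the finite common quantity $\tfrac12\int_{\{f<\ell\}}|\D f|^2\,\d\mm$ leaves $\int_{\{f>\ell\}}|\D f|^2\,\d\mm\le0$. Hence $|\D f|=0$ $\mae$ on $\{f>\ell\}$, i.e. the nonnegative function $h:=(f-\ell)^+\in\W(\Omega)$ has $|\D h|=0$ $\mae$.

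The remaining and only genuinely non-trivial step is to pass from $|\D h|=0$ to $h=0$: on a general metric measure space a vanishing weak upper gradient need not force constancy, and this is exactly where the $\RCD$ hypothesis enters, through \eqref{eq:StL}. Since $\supp h\subset\supp g\subset\Omega$, extending $h$ by $0$ produces $\tilde h\in\W(\Sp)$ with $|\D\tilde h|=0$, and \eqref{eq:StL} furnishes a representative of $\tilde h$ of Lipschitz constant $0$, i.e. $\tilde h$ agrees $\mae$ with a constant $c\ge0$. If $c>0$ then $f=\ell+c>\ell$ $\mae$ on $\Omega$; but choosing $x_0\in\partial\Omega$ and using $f(x_0)\le M<\ell$ and continuity, $f<\ell$ on the nonempty open set $B_r(x_0)\cap\Omega$, which has positive measure because $\supp\mm=\Sp$, a contradiction. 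Therefore $c=0$, so $h=0$ and $f\le\ell$ $\mae$ on $\Omega$, as required.

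Finally, when $\Omega=\Sp$ one has $\partial\Omega=\emptyset$, and the same computation carried out with competitors defined on all of $\Sp$ shows that $(f-\ell)^+$ has vanishing weak upper gradient and is therefore $\mae$ constant for every admissible level; tracking the value of this constant as $\ell$ varies forces $f$ to be $\mae$, hence by continuity everywhere, constant, which is the stated convention. The hard part throughout is precisely the conversion of the one-sided variational inequality into the pointwise identity $|\D h|=0$, followed by the rigidity $|\D h|=0\Rightarrow h\ \text{constant}$ supplied by \eqref{eq:StL}; once these are secured, the support condition $\supp g\subset\Omega$ — which is what makes the truncation an admissible competitor and lets it extend to a global Sobolev function — closes the argument.
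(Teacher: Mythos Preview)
Your proof is correct and follows essentially the same route as the paper's: both test subharmonicity against the truncation $\min(f,\ell)$ to force $|\D f|=0$ on the superlevel set, and then apply the Sobolev-to-Lipschitz property \eqref{eq:StL} to a globally defined auxiliary function to reach a contradiction. The only cosmetic differences are that the paper frames the argument by contradiction from the outset and applies \eqref{eq:StL} to $\max\{c,\chi_\Omega f\}$ rather than to the zero-extension of $(f-\ell)^+$, but these two functions differ only by the additive constant $\ell$, so the arguments coincide.
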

\begin{proof} We argue by contradiction. If \eqref{eq:wmax} does not hold, regardless of weather $\Omega$ coincides with $\Sp$ or not, we can find  $c<\sup_\Omega f$ such that the function
\[
\tilde f:=\min\{c,f\}
\]
agrees with $f$ on $\partial\Omega$. The locality of the differential grants that
\begin{equation}
\label{eq:tildef}
|\D\tilde f|=\nchi_{\{f<c\}}|\D f|
\end{equation}
and from the assumption that $f$ is subharmonic and the fact that $\tilde f\leq f$ we deduce that
\[
\int_\Omega|\D f|^2\,\d\mm\leq \int_\Omega|\D \tilde f|^2\,\d\mm\stackrel{\eqref{eq:tildef}}=\int_{\{f<c\}\cap\Omega}|\D f|^2\,\d\mm,
\]
which forces 
\begin{equation}
\label{eq:dz}
\text{$|\D f|=0$ $\mm$-a.e.\ on $\{f\geq c\}$.}
\end{equation}
Now consider the function $g:=\max\{c,\nchi_\Omega f\}$, notice that our assumptions grant that $g\in C(\Sp)$ and that the locality of the differential yields
\begin{equation}
\label{eq:dg}
|\D g|=\nchi_{\Omega\cap\{f>c\}}|\D f|\stackrel{\eqref{eq:dz}}=0.
\end{equation}
Hence property \eqref{eq:StL} gives that $g$ is constant, i.e.\ $f\leq c$ on $\Omega$. This contradicts our choice of $c$ and gives the conclusion.
\end{proof}
We remark that in the finite-dimensional case one could conclude from \eqref{eq:dg} by using the Poincar\'e inequality in place of property \eqref{eq:StL}.

\bigskip

To prove the strong maximum principle we need to recall few facts. The first is the concept of measure-valued Laplacian (see \cite{Gigli12}), for which we restrict the attention to  proper (=closed bounded sets are compact) and infinitesimally Hilbertian  (=$W^{1,2}(\Sp)$ is an Hilbert space, see \cite{Gigli12}) spaces:
\begin{definition}[{\bf Measure valued Laplacian}] Let $(\X,\sfd,\mm)$ be proper and infinitesimally Hilbertian,  $\Omega \subset \X$ open and $f \in \W(\Omega)$. We say that $f$ has a measure valued Laplacian in $\Omega$, and write $f\in D(\bd,\Omega)$, provided there exists a Radon measure, that we denote by $\bold{\Delta} f \restr{\Omega}$, such that for every $g \colon \Sp \rightarrow \real$ Lipschitz with  support compact and contained in $\Omega$ it holds
\begin{equation}
\displaystyle \int g \, \d \bold{\Delta} f \restr{\Omega} = - \int \langle \nabla f, \nabla g \rangle \, \d \mea.
\end{equation}
If $\Omega=\Sp$ we write $f\in D(\bd)$ and $\bd f$.
\end{definition}
Much like in the smooth case, it turns out that being subharmonic is equivalent to having non-negative Laplacian. This topic has been investigated in \cite{Gigli12} and \cite{Gigli-Mondino12}, here we report the proof of this fact because in  \cite{Gigli-Mondino12} it has been assumed the presence of a Poincar\'e inequality, while working on proper infinitesimally Hilbertian spaces allows to easily remove such assumption.
\begin{theorem}
Let $(\X,\sfd,\mm)$ be a proper infinitesimally Hilbertian space,  $\Omega \subset \X$ open and $f \in \W(\Omega)$.

Then $f$ is subharmonic (resp.\ superharmonic, resp.\ harmonic) if and only if $f\in D(\bd,\Omega)$ with $\bd f\restr\Omega\geq 0$ (resp.\ $\bd f\restr\Omega\leq 0$, resp.\ $\bd f\restr\Omega= 0$).
\end{theorem}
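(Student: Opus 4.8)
The plan is to reduce everything to the subharmonic case and then to exploit that the space is infinitesimally Hilbertian, so that $(f,g)\mapsto\int\langle\nabla f,\nabla g\rangle\,\d\mm$ is a symmetric bilinear form. Since $f$ is superharmonic exactly when $-f$ is subharmonic, with $\bd(-f)\restr\Omega=-\bd f\restr\Omega$, and since harmonicity is the conjunction of the two, it suffices to prove that $f$ is subharmonic if and only if $f\in D(\bd,\Omega)$ with $\bd f\restr\Omega\geq0$. I would first linearize \eqref{eq:subhvf}: given $g\in\W(\Omega)$ with $g\le0$ and $\supp g\subset\Omega$ compact, replacing $g$ by $tg$ with $t\in(0,1]$ and expanding the square gives
\[
0\le t\int_\Omega\langle\nabla f,\nabla g\rangle\,\d\mm+\tfrac{t^2}{2}\int_\Omega|\D g|^2\,\d\mm;
\]
dividing by $t$ and letting $t\downarrow0$ yields $\int_\Omega\langle\nabla f,\nabla g\rangle\,\d\mm\ge0$, while conversely this inequality reinserted in the same expansion gives back \eqref{eq:subhvf}. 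Hence subharmonicity of $f$ is equivalent to
\[
\int_\Omega\langle\nabla f,\nabla g\rangle\,\d\mm\ge0\qquad\text{for all }g\in\W(\Omega),\ g\le0,\ \supp g\subset\Omega\text{ compact}.
\]

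For the implication towards the Laplacian I would set $L(g):=-\int\langle\nabla f,\nabla g\rangle\,\d\mm$ on the vector space of Lipschitz functions with compact support in $\Omega$; applied to $-g$, the reformulation above shows that $L$ is a \emph{positive} linear functional, i.e.\ $L(g)\ge0$ whenever $g\ge0$. To represent it by a Radon measure I would produce a local sup-norm bound: for a compact $K\subset\Omega$ choose a Lipschitz cutoff $\chi$ with $0\le\chi\le1$, $\chi\equiv1$ on $K$ and $\supp\chi$ compact in $\Omega$ — this is where properness is used, since $\{\,\dsp(\cdot,K)\le r\,\}$ is bounded and closed, hence compact, for $r$ small. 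From $-\|g\|_\infty\chi\le g\le\|g\|_\infty\chi$, positivity and linearity give $|L(g)|\le L(\chi)\,\|g\|_\infty$ for every Lipschitz $g$ supported in $K$, so $L$ extends by continuity to a positive functional on $C_c(\Omega)$, and the Riesz representation theorem furnishes a non-negative Radon measure $\mu$ with $L(g)=\int g\,\d\mu$. This $\mu$ is precisely $\bd f\restr\Omega$, and it is $\ge0$.

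For the converse, assume $f\in D(\bd,\Omega)$ with $\bd f\restr\Omega\ge0$ and take $g\in\W(\Omega)$, $g\le0$, with compact support. For \emph{Lipschitz} such $g$ the definition of the measure-valued Laplacian gives $\int_\Omega\langle\nabla f,\nabla g\rangle\,\d\mm=-\int g\,\d\bd f\restr\Omega\ge0$, which is exactly the reformulated condition; it thus remains to pass from Lipschitz to arbitrary $\W$ test functions. I would approximate $g$ in $\W$ by Lipschitz functions with compact support in $\Omega$ and the correct sign: extend $g$ by $0$ to an element of $\W(\Sp)$ using locality, invoke the density of Lipschitz functions in $\W(\Sp)$ to obtain $h_n\to g$ in $\W(\Sp)$, multiply by a fixed Lipschitz cutoff equal to $1$ on a neighbourhood of $\supp g$ (via the Leibniz rule this preserves $\W$-convergence and restores compact support), and finally replace each factor by $\min\{\cdot,0\}$, which is Lipschitz, non-positive, and still converges to $\min\{g,0\}=g$ in $\W$ because post-composition with a $1$-Lipschitz map is continuous on $\W$. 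Since $\nabla f\in L^2(\Omega)$, passing to the limit in $\int_\Omega\langle\nabla f,\nabla g_n\rangle\,\d\mm\ge0$ delivers the inequality for $g$, hence subharmonicity by the first paragraph. I expect the main obstacle to be exactly this last approximation: one must simultaneously retain the sign constraint $g\le0$, the compactness of the support inside $\Omega$, and $\W$-convergence, which is what allows the Lipschitz test-function class of the Laplacian to communicate with the $\W$ test-function class of Definition \ref{def:harm}; the Riesz step and the genuine compactness of the cutoffs are the points where properness is needed.
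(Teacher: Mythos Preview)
Your proof is correct and follows essentially the same approach as the paper's: linearize the minimization condition \eqref{eq:subhvf} to a sign condition on $\int_\Omega\langle\nabla f,\nabla g\rangle\,\d\mm$, represent the resulting positive linear functional by a non-negative Radon measure, and for the converse direction approximate admissible $W^{1,2}$ competitors by Lipschitz ones preserving the sign constraint. The only cosmetic differences are that you expand the square directly using the Hilbertian parallelogram identity where the paper argues via convexity of the Dirichlet energy, and you spell out the Riesz representation step with an explicit cutoff bound where the paper simply cites \cite{Bogachev07}.
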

\begin{proof}\\
\noindent{\bf Only if} Let ${\rm LIP}_c(\Omega)\subset W^{1,2}(\Omega)$ be the space of Lipschitz functions with support compact and contained in $\Omega $. For $g\in {\rm LIP}_c(\Omega)$ non-positive and $\eps>0$ apply \eqref{eq:subhvf} with $\eps g$ in place of $g$ to deduce
\[
\int_\Omega|\D (f+\eps g)|^2-|\D f|^2\,\d\mm\geq 0
\]
and dividing by $\eps$ and letting $\eps\downarrow0$ we conclude
\[
\int_\Omega\la\nabla f,\nabla g\ra\,\d\mm\geq 0.
\]
In other words, the linear functional ${\rm LIP}_c(\Omega)\ni g\mapsto -\int_\Omega\la\nabla f,\nabla g\ra\,\d\mm$ is positive. It is then well known, see e.g.\ \cite[Theorem 7.11.3]{Bogachev07}, that the monotone extension of such functional to the space of continuous and compactly supported functions on $\Omega$ is uniquely represented by integration w.r.t.\ a non-negative measure, which is the claim. 

\noindent{\bf If} Recall from \cite{AmbrosioGigliSavare11} that on general metric measure spaces Lipschitz functions are dense in energy in $W^{1,2}$; since infinitesimally Hilbertianity implies uniform convexity of $W^{1,2}$, we see that in our case  they are dense in the $W^{1,2}-$norm. Then by truncation and cut-off argument we easily see that 
\begin{equation}
\label{eq:lipd}
\text{$\big\{g\in {\rm LIP}_c(\Omega):g\leq 0\big\}$\quad is $W^{1,2}-$dense in\quad $\big\{g\in W^{1,2}(\Omega):\ g\leq 0\ \supp(g)\subset \Omega\big\}$}.
\end{equation}
Now notice that the convexity of $g\mapsto\frac12\int_\Omega|\D g|^2\,\d\mm$ grants that for any $g\in W^{1,2}(\Omega)$ it holds
\[
|\D(f+g)|^2-|\D f|^2\geq \lim_{\eps\downarrow0}\frac{|\D(f+\eps g)|^2-|\D f|^2}\eps=2\la\nabla f,\nabla g\ra
\]
and thus from the assumption $\bd f\restr\Omega\geq 0$ we deduce that
\begin{equation}
\label{eq:magglip}
\int_\Omega |\D(f+g)|^2-|\D f|^2\,\d\mm\geq 0
\end{equation}
for every  $g\in {\rm LIP}_c(\Omega)$ non-positive. Taking \eqref{eq:lipd} into account we see that  \eqref{eq:magglip} also holds for any $g\in W^{1,2}(\Omega)$ non-negative with $\supp(g)\subset\Omega$, which is the thesis.
\end{proof}
For $x\in\Sp$ we write $\sfd_x$ for the function $y\mapsto\sfd(x,y)$.  We shall need the following two properties of the squared distance function valid on $\RCD(K,N)$ spaces, $N<\infty$:
\begin{align}
\label{diffdist}
\sfd_{x_0}^2\in W^{1,2}_\loc(\Sp)\quad\text{ and }\quad|{\D ( \dist_{x_0}^2 )}|^2 &= 2\dist_{x_0}^2 \quad \mae,\\
\label{lapdist}
\sfd_{x_0}^2\in D(\bd)\quad\text{ and }\ \quad\bd {\dist_{x_0}^2(x)}& \le {\ell}_{K, N}(\dist_{x_0})\mm,
\end{align}
where $\ell_{K,N}:[0,+\infty)\to[0,+\infty)$  is some continuous function depending only on $K,N$. Property \eqref{diffdist} can be seen as a consequence of  Cheeger's work \cite{Cheeger00}: recall that $\CD(K,N)$ spaces are doubling (\cite{Sturm06II}) and support a 1-2 weak Poincar\'e inequality (\cite{Rajala12}) and notice that, being geodesic, the local Lipschitz constant of $\sfd_x$ is identically 1. An alternative proof, more tailored to the $\RCD$ setting, passes through the fact that  $\sfd_{x_0}^2/2$ is $c$-concave and uses the regularity of $W_2$-geodesics, see for instance \cite{GT17} for the details of the proof.

The Laplacian comparison estimate \eqref{lapdist} is one of the main results in \cite{Gigli12}. Notice that in \cite{Gigli12} such inequality has been obtained in its sharp form, but for our purposes the above formulation is sufficient.

Beside these facts, we shall need the following geometric property of $\RCD$ spaces, which we believe is interesting on its own:
\begin{lemma}[a.e.\ unique projection]\label{le:proj}
Let $K\in\R$, $N\in[1,\infty)$, $\mms$  an $\RCD(K, N)$ space  and $C\subset\Sp$ a closed set. Then for $\mm$-a.e.\ $x\in\Sp$ there exists a unique $y\in C$ such that
\begin{equation}
\label{eq:proj}
\sfd(x,y)=\min_{z\in C}\sfd(x,z).
\end{equation}
\end{lemma}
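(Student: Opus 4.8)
The plan is to reduce the geometric statement to an analytic property of the squared distance to $C$. For $x\in C$ the projection is $x$ itself and is unique, so it suffices to treat $x\in\X\setminus C$; since the conclusion is $\mm$-a.e.\ and local, by a standard exhaustion I may also restrict attention to points lying in a fixed ball and with $\sfd_C(x):=\min_{z\in C}\sfd(x,z)\le R$. Properness of $\RCD(K,N)$ spaces with $N<\infty$ guarantees the minimum in \eqref{eq:proj} is attained. I then set
\[
u:=\tfrac12\sfd_C^2=\inf_{z\in C}\tfrac12\sfd_z^2,
\]
and the goal becomes to show that for $\mm$-a.e.\ $x$ the minimizer is unique, by relating this to regularity of $u$ at $x$.

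First I would record two properties of $u$. On the one hand, $u\in\W_\loc(\X)$ and the eikonal identity $|\D u|^2=2u$ holds $\mm$-a.e.\ on $\X\setminus C$: this is proved exactly as \eqref{diffdist}, using that $\sfd_C$ is $1$-Lipschitz (so $|\D\sfd_C|\le 1$) together with the fact that along a minimizing geodesic to a projection $\sfd_C$ decreases at unit speed (giving $|\D\sfd_C|=1$ $\mm$-a.e.\ outside $C$). On the other hand, $u$ has a measure-valued Laplacian bounded above, i.e.\ $\bd u\le g\,\mm$ on the region under consideration for some locally bounded $g$. This is where \eqref{lapdist} enters: each $\tfrac12\sfd_z^2$ satisfies $\bd\big(\tfrac12\sfd_z^2\big)\le\tfrac12\ell_{K,N}(\sfd_z)\,\mm$, the right-hand sides are uniformly bounded on our bounded region by continuity of $\ell_{K,N}$, and an upper Laplacian bound is stable under infima. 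Equivalently, $-u$ is a supremum of functions that are subharmonic up to a common locally bounded correction, and a supremum of subharmonic functions is subharmonic, so $\bd u\le g\,\mm$. In particular $u$ is semiconcave.

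The heart of the argument is a pointwise mechanism: \emph{where $u$ is first-order differentiable, the projection is unique}. Indeed, suppose $x$ admitted two projections $y_1\neq y_2$ at distance $r=\sfd_C(x)$, joined to $x$ by unit-speed minimizing geodesics $\gamma^1,\gamma^2$. Along $\gamma^i$ one has $\sfd_C(\gamma^i_t)=r-t$ for small $t\ge 0$, whence $u(\gamma^i_t)=\tfrac12(r-t)^2$ and the one-sided derivative of $u$ at $x$ along $\gamma^i$ equals $-r$. The eikonal identity gives $|\nabla u|(x)=r$. Testing the differential of $u$ against the unit initial velocities $\dot\gamma^i_0$ and invoking Cauchy--Schwarz, the equalities $\langle\nabla u,\dot\gamma^i_0\rangle=-r=-|\nabla u|\,|\dot\gamma^i_0|$ force equality in Cauchy--Schwarz, so $\dot\gamma^1_0=\dot\gamma^2_0=-\nabla u(x)/r$. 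Two minimizing geodesics issuing from $x$ with the same initial velocity coincide by the essentially non-branching structure of $\RCD(K,N)$ spaces (concretely: any point $x_t$ strictly interior to a minimizing geodesic $[x,y_i]$ has unique projection, since two minimizing geodesics from $x$ sharing the subsegment $[x,x_t]$ must agree), and thus $y_1=y_2$, a contradiction.

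The main obstacle is to promote this heuristic to an $\mm$-a.e.\ statement, because on $\RCD$ spaces both "differentiability'' and "initial velocity of a geodesic'' must be read through the tangent module and test-plan calculus. Two ingredients are needed: (i) for $\mm$-a.e.\ $x$ the gradient $\nabla u(x)$ is a well-defined tangent element with $|\nabla u|(x)=\sfd_C(x)$, which is routine from $u\in\W_\loc(\X)$ and the eikonal identity; and (ii) a first-variation formula equating the one-sided derivative of $u$ along a minimizing geodesic to a projection with $\langle\nabla u(x),\dot\gamma_0\rangle$, valid at $\mm$-a.e.\ $x$ and for the two competing geodesics at once. Step (ii) is the substantive part: it is precisely here that the semiconcavity of $u$ and the essentially non-branching property must be combined to exclude that the set of points with non-unique projection — equivalently, the set where $u$ has a genuine downward kink — carries positive $\mm$-measure. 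Phrased through the $x_t$-observation above, the bad set consists of "endpoint/cut-type'' points, and the remaining task is to prove such a set is $\mm$-null. I expect this measure-zero step, rather than the Cauchy--Schwarz algebra, to be the crux of the proof.
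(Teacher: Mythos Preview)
Your outline is morally the Riemannian proof, but as you yourself flag, the decisive step~(ii) is left open: you need, at $\mm$-a.e.\ $x$ with two candidate projections, to make sense of the two ``initial velocities'' $\dot\gamma^i_0$ as elements of the tangent module at that point, to justify the first-variation identity $\langle\nabla u(x),\dot\gamma^i_0\rangle=-r$ for \emph{both} competitors simultaneously, and then to pass from $\dot\gamma^1_0=\dot\gamma^2_0$ to $y_1=y_2$. None of these is routine on an $\RCD$ space. The tangent module gives $\nabla u$ only as an $L^2$-section, not a pointwise object; speeds of individual geodesics are not elements of the module; and ``essentially non-branching'' is a statement about optimal dynamical plans, not a pointwise uniqueness-from-initial-data principle (your parenthetical about interior points $x_t$ having unique projection does not by itself control the endpoint $x$). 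So the proposal is not yet a proof: the part you label ``the crux'' is genuinely the whole content.

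The paper bypasses all of this with a two-line optimal-transport argument. Write $\varphi(x):=\inf_{z\in C}\tfrac12\sfd^2(x,z)=\psi^c(x)$ with $\psi:=0$ on $C$ and $-\infty$ off $C$; then $\varphi$ is $c$-concave, and any projection $y$ of $x$ satisfies $\varphi(x)+\varphi^c(y)\ge\varphi(x)+\psi(y)=\tfrac12\sfd^2(x,y)$, i.e.\ $y\in\partial^c\varphi(x)$. One then invokes Theorem~3.4 of Gigli--Rajala--Sturm, which says that on $\RCD(K,N)$ spaces a real-valued $c$-concave function has a singleton $c$-superdifferential at $\mm$-a.e.\ point. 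That cited theorem is precisely where the non-branching and optimal-map machinery is packaged; your eikonal/Cauchy--Schwarz program is, in effect, an attempt to reprove it from scratch. The semiconcavity/Laplacian-comparison input you assemble is correct but not actually needed for the lemma.
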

\begin{proof} Existence follows trivially from the fact that $\Sp$ is proper. For  uniqueness define
\[
\varphi(x):=\inf_{z\in C}\frac{\sfd^2(x,z)}2=\psi^c(x)\qquad\text{ where }\qquad\psi(y):=\left\{\begin{array}{ll}
0,&\qquad\text{ if }y\in C,\\
-\infty,&\qquad\text{ if }y\in \Sp\setminus C.
\end{array}\right.
\]
Since $\varphi^c=\psi^{cc}\geq\psi$, if $x\in\Sp$ and $y\in C$ are such that \eqref{eq:proj} holds,  we have
\[
\varphi(x)+\varphi^c(y)\geq \varphi(x)+\psi(y)\stackrel{\eqref{eq:proj}}=\frac{\sfd^2(x,y)}{2},
\]
i.e.\ $y\in\partial^c\varphi(x)$. Conclude recalling that since  $\varphi$ is  $c$-concave and real valued,  Theorem 3.4 in \cite{GigliRajalaSturm13} grants that for $\mm$-a.e.\ $x$ there exists a unique $y\in\partial^c\varphi(x)$.
\end{proof}
\begin{remark}\label{re:proj}{\rm The simple proof of this lemma relies on quite delicate properties of $\RCD$ spaces, notice indeed that the conclusion  can fail on the more general $\CD(K,N)$ spaces. Consider for instance $\R^2$ equipped with the distance coming from the $L^\infty$ norm and the Lebesgue measure $\mathcal L^2$. This is a  $\CD(0,2)$ space, as shown in the last theorem in \cite{Villani09}. Then pick $C:=\{(z_1,z_2):z_1\geq 0\}$ and notice that for every $(x_1,x_2)\in \R^2$ with $x_1<0$ there are uncountably many minimizers in \eqref{eq:proj}.
}\fr\end{remark}

We can now prove the main result of this note:
\begin{theorem}[{\bf Strong Maximum Principle}]
Let $K\in\R$, $N\in[1,\infty)$ and $\mms$  an $\RCD(K, N)$ space. Let  $\Omega\subset \Sp$ be open and connected and  let $f \in \W(\Omega) \cap \C(\bar{\Omega})$ be subharmonic and such that for some  $\bar x \in \Omega$ it holds  $f(\bar x) = \max_{\bar{\Omega}} f$. Then $f$ is constant.
\end{theorem}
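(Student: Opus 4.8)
The plan is to reproduce, in the metric-measure setting, the classical barrier (Hopf-type) proof of the strong maximum principle, replacing the pointwise differential inequality at an interior maximum by a \emph{strict} maximum principle extracted from the Weak Maximum Principle together with the Laplacian comparison \eqref{lapdist}. Set $m:=\max_{\bar\Omega}f=f(\bar x)$ and let $E:=\{x\in\Omega:\ f(x)=m\}$, a nonempty relatively closed subset of $\Omega$. If $E=\Omega$ then continuity gives $f\equiv m$ and we are done, so assume $E\neq\Omega$. Since $\Omega$ is connected, $E$ cannot be open, so there is $\bar p\in E$ with points of $\Omega\setminus E$ arbitrarily close to it; choosing $p\in\Omega\setminus E$ with $\sfd(p,\bar p)$ much smaller than $\sfd(\bar p,\partial\Omega)$ we guarantee that $r:=\sfd(p,E)>0$ satisfies $\bar B(p,r)\subset\Omega$. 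Properness of $\Sp$ provides $q\in E$ with $\sfd(p,q)=r$; by construction $f<m$ on $B(p,r)$ while $f(q)=m$, so $q\in\partial B(p,r)$ is the sought touching point.

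Next I would build a barrier centred at $p$. Fix $0<\rho<r$ and, for a parameter $\alpha>0$ to be tuned, set $w:=e^{-\alpha\sfd_p^2}-e^{-\alpha r^2}$, where $\sfd_p=\sfd(p,\cdot)$. Using that $\sfd_p^2\in D(\bd)$ with $|\D\sfd_p^2|^2=2\sfd_p^2$ by \eqref{diffdist}, the chain rule for the measure-valued Laplacian, and the comparison $\bd\sfd_p^2\le\ell_{K,N}(\sfd_p)\mm$ from \eqref{lapdist}, one obtains
\[
\bd w \ \ge\ \alpha\,e^{-\alpha\sfd_p^2}\big(2\alpha\,\sfd_p^2-\ell_{K,N}(\sfd_p)\big)\,\mm .
\]
As $\ell_{K,N}$ is bounded on the bounded set $\bar B(p,r)$ and $\sfd_p\ge\rho>0$ on the annulus $A:=B(p,r)\setminus\bar B(p,\rho)$, for $\alpha$ large the right-hand side is a strictly positive multiple of $\mm$ there. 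In particular $w$ is subharmonic on $A$, and since $\sfd_p(q)=r>\rho$ there is a neighbourhood $V\subset\Omega$ of $q$ on which $\bd w\ge c_0\,\mm$ with $c_0>0$.

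I would then check that $q$ is an interior maximum of $h:=f+\eps w$ for suitably small $\eps>0$. On the inner sphere $\partial B(p,\rho)$ compactness and $f<m$ on $\bar B(p,\rho)$ give $f\le m-\delta$ for some $\delta>0$, while $w$ is bounded, so $h\le m$ there once $\eps$ is small; on the outer sphere $\partial B(p,r)$ one has $w=0$, hence $h=f\le m$. Since $h$ is subharmonic on $A$, the Weak Maximum Principle (Theorem~\ref{WMP}) yields $h\le m$ on $\bar A$. Finally, for $x$ near $q$ with $\sfd_p(x)\ge r$ one has $w(x)\le0$ and $f(x)\le m$, so $h(x)\le m$; combining the two regions, $h\le m=h(q)$ on all of $V$, i.e.\ $h$ attains an interior maximum at $q$.

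The conceptual crux — the replacement of ``$\nabla f(q)=0$ at an interior maximum'' — is the following strict maximum principle, which I would isolate as the main step: \emph{a function $h\in D(\bd,V)$ whose Laplacian is bounded below by a strictly positive multiple of $\mm$ cannot attain an interior maximum}. Indeed, with $B(q,s)\Subset V$ and $0<\lambda<c_0\big/\!\sup_{B(q,s)}\ell_{K,N}(\sfd_q)$, the function $\tilde h:=h-\lambda\sfd_q^2$ satisfies $\bd\tilde h\ge(c_0-\lambda\,\ell_{K,N}(\sfd_q))\mm\ge0$ by \eqref{lapdist}, hence is subharmonic, while $\tilde h(q)=m$ and $\tilde h\le m-\lambda s^2<m$ on $\partial B(q,s)$; Theorem~\ref{WMP} then forces $\sup_{B(q,s)}\tilde h<m$, contradicting $\tilde h(q)=m$. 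Applying this to $h=f+\eps w$, whose Laplacian is $\bd f\restr V+\eps\,\bd w\ge\eps c_0\,\mm>0$ since $f$ is subharmonic (so $\bd f\restr\Omega\ge0$ by the characterisation proved above), contradicts the interior maximum at $q$; hence $E\neq\Omega$ is impossible and $f$ is constant. I expect the only genuinely delicate points to be the chain rule for $\bd$ applied to $\sfd_p^2$ and the tuning of $\rho,\alpha,\eps,\lambda$, the former being the main technical obstacle. (One could instead centre the barrier at the a.e.-unique projection of $p$ onto $E$ provided by Lemma~\ref{le:proj}, but a single-point barrier already suffices.)
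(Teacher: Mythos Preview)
Your argument is correct and follows the classical Hopf barrier strategy more closely than the paper does. Both proofs use the same exponential barrier $e^{-\alpha\sfd_p^2}-e^{-\alpha r^2}$ together with the Laplacian comparison \eqref{lapdist}, but they extract the contradiction differently. The paper's proof relies essentially on Lemma~\ref{le:proj}: choosing $x\in\Omega'$ so that the nearest point $y\in C$ is \emph{unique} guarantees that $\bar B_r(x)$ meets $C$ only at $y$, and this is precisely what makes the compact set $\partial B_{r'}(y)\cap\bar B_r(x)$ disjoint from $C$, permitting a single application of Theorem~\ref{WMP} on the small ball $B_{r'}(y)$. You bypass uniqueness entirely by (i) applying Theorem~\ref{WMP} on the annulus $A$ to obtain $h\le m$ there, and then (ii) introducing a second comparison function $-\lambda\sfd_q^2$ to upgrade the conclusion to a \emph{strict} maximum principle via one more use of Theorem~\ref{WMP} on $B(q,s)$. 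Your route is a bit longer and invokes \eqref{lapdist} twice (for $\sfd_p^2$ and for $\sfd_q^2$), but it shows that Lemma~\ref{le:proj}---which the paper highlights as a result of independent interest---is not actually needed for the strong maximum principle itself. One small correction to your closing parenthetical: in the paper the barrier is also centred at the point $x\in\Omega'$, not at its projection; what Lemma~\ref{le:proj} provides is the uniqueness of $y$, not an alternative centring.
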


\begin{proof} Put $m := \sup_{\Omega} f$, $C := \{  x \in \bar\Omega : f(x) = m \}$ and define 
\[
\Omega':=\big\{x\in\Omega\setminus C\ :\ \sfd(x,C)<\sfd(x,\partial\Omega)\big\}.
\]
By assumption we know that $C\cap\Omega\neq \emptyset$ and that $\Omega$ is connected, thus since $C$ is closed, either $C\supset \Omega$, in which case we are done, or  $\partial C\cap\Omega\neq\emptyset$, in which case $\Omega'\neq\emptyset$. We now show that such second case cannot occur, thus concluding the proof.

Assume by contradiction that $\Omega'\neq\emptyset$, notice that $\Omega'$ is open and thus $\mm(\Omega')>0$. Hence by Lemma \ref{le:proj} we can find $x\in\Omega'$ and $y\in C$ such that \eqref{eq:proj} holds. Notice that the definition of $\Omega'$ grants that $y\in\Omega$, put $r:=\sfd(x,y)$ and define
\[
h(z):=e^{-A\sfd^2(z,x)}-e^{-Ar^2},
\]
where $A\gg1$ will be fixed later. By the chain rule for the measure-valued Laplacian (see \cite{Gigli12}) we have that $h\in D(\bd)$ with
\[
\bd h=A^2e^{-A\sfd^2_x}|\D \sfd_x^2|^2\,\mm-Ae^{-A\sfd^2_x}\bd\sfd_x^2\stackrel{\eqref{diffdist},\eqref{lapdist}}\geq 2e^{-A\sfd^2_x}\big(A^2 \sfd_x^2-A\ell_{K,N}(\sfd_x)\big)\mm.
\]
Hence we can, and will, choose  $A$ so big that $\bd h\restr{B_{r/2}(y)}\geq 0$. Now let  $r'<r/2$ be such that $B_{r'}(y)\subset \Omega$ and notice that for every $\eps>0$ the function $f_\eps:=f+\eps h$ is subharmonic in $B_{r'}(y)$ and thus according to Theorem \ref{WMP} we  have
\begin{equation}
\label{eq:contr}
f_\eps(y)\leq\sup_{\partial B_{r'}(y)}f_\eps,\qquad\forall \eps>0.
\end{equation}
Since $\{h<0\}=\X\setminus \bar B_r(x)$ and $h(y)=0$ we have
\begin{equation}
\label{eq:c1}
f_\eps(y)>f_\eps(z)\qquad\forall z\in {\partial B_{r'}(y)\setminus \bar B_r(x)},\ \forall \eps>0.
\end{equation} 
On the other hand, $\partial B_{r'}(y)\cap \bar B_r(x)$ is a compact set contained in $\Omega\setminus C$, hence by continuity and the definition of $C$ we have
\[
f(y)>\sup_{\partial B_{r'}(y)\cap \bar B_r(x)}f
\]
and thus for $\eps>0$ sufficiently small we also have
\[
f_\eps(y)>\sup_{\partial B_{r'}(y)\cap \bar B_r(x)}f_\eps.
\]
This inequality, \eqref{eq:c1} and the continuity of $f_\eps$ contradict \eqref{eq:contr}; the thesis follows.
\end{proof}

\def\cprime{$'$} \def\cprime{$'$}

\end{document}